\newcommand{\mc}[1]{\ensuremath{\mathcal{#1}}} 
\newcommand{\EE}{{\mathbb E}}
\newcommand{\po}{\mathcal{P}}
\definecolor{highlight}{rgb}{1,0.2,0.2}
\newtheorem{open}{Open Question}
\def\BBox{\kern  -0.2cm\hbox{\vrule width 0.2cm height 0.2cm}}
\newtheorem{lemma}{Lemma}[section]
\newtheorem{theorem}{Theorem}[section]
\newtheorem{acknowledgements}[theorem]{Acknowledgements}
\begin{document}

\title{Polytopes with Preassigned Automorphism Groups}
\author{
Egon Schulte\thanks{Email: schulte@neu.edu}\\
Northeastern University,\\
Department of Mathematics,\\
Boston, MA 02115, USA\\[.15in]
and\\[.15in] 
Gordon Ian Williams\thanks{Email:\ giwilliams@alaska.edu}\\
University of Alaska Fairbanks\\
Department of Mathematics and Statistics,\\
Fairbanks, AK 99709, USA}

\maketitle

\begin{abstract}
\noindent
{We prove that every finite group is the automorphism group of a finite abstract polytope isomorphic to a face-to-face tessellation of a sphere by topological copies of convex polytopes. We also} show that this abstract polytope may be realized as a convex polytope.\\[.2in]
{\it Keywords: Convex polytope, abstract polytope, automorphism group, barycentric subdivision.}\\[.05in]
{\it Subject classification:\ Primary 52B15; Secondary 52B11, 51M20}
\end{abstract}

\section{Introduction}\label{intro}
The study of the symmetry properties of convex polyhedra goes back to antiquity with the classification of the Platonic solids as the solids with regular polygonal faces possessing the greatest amount of possible symmetry. Subsequent investigation of {\em regular} polyhedra --- those in which any incident triple of vertex, edge and facet could be mapped to another by an isometry of the figure --- led to a number of interesting classes and relaxations of the definition of a polyhedron to encompass other types of regular polyhedra such as the Kepler-Poinsot polyhedra (allowing various kinds of self intersection), the Coxeter-Petrie polyhedra (allowing polyhedra with infinitely many faces)\cite{Cox73} and the Gr\"unbaum-Dress polyhedra (allowing nonplanar and non-finite faces)\cite{Gru77,Dre81,Dre85}. 

A particularly fruitful area of investigation during the last thirty years has been the study of abstract polytopes.  These are partially ordered sets which inherit their structure from derived constraints on convex polytopes. Of particular interest are the regular abstract polytopes --- those in which any two maximal chains may be mapped to each other by an automorphism of the poset --- whose structure is completely determined by their automorphism groups \cite{McMSch02}, and their close cousins the chiral polytopes~\cite{Pel12,SW1}. Both of these classes of abstract polytope have very restricted classes of groups from which their automorphism groups may be selected. This suggests a very natural question: which finite groups arise as the automorphism groups of polytopes, {
 specifically spherical or convex polytopes}? In the context of the current work we answer {\em all of them!}

\section{Basic Notions}
 For the convenience of the reader and to introduce  necessary notation, we recall some basic definitions from the theory of convex and abstract polytopes (standard references are \cite{Zie95,Gru03,McMSch02}). Let $\EE^{n}$ denote Euclidean $n$-space. A {\em convex polytope} $P$ is the {\em convex hull} of a finite collection of points in $\EE^{n}$. Alternately (and equivalently), it is any compact set obtained as the intersection of finitely many closed half-spaces in $\EE^{n}$. A {\em supporting} hyperplane $H$ of a convex polytope $P$ is any hyperplane $H$ such that $H\cap P\ne \emptyset$ and $P$ lies entirely in one of the two closed half-spaces determined by $H$. A {\em face} $F$ of $P$ is the intersection of a supporting hyperplane and $P$, or the empty set or $P$. The faces of $P$ form a poset under inclusion; this is a lattice called the {\em face lattice\/} of $P$, and is denoted {\mc L}(P). The maximal chains of faces of $P$ are called {\em flags}. Two flags are said to be {\em adjacent} if they differ by only a single element (face); if this face is an $i$-face, the two flags are {\em $i$-adjacent}. A convex polytope $P$ is {\em flag connected}, meaning that for every pair of flags $\Phi, \Psi$ of $P$  there exists a finite sequence of flags $\Phi=\Phi_{0},\Phi_{1},\ldots,\Phi_{k}=\Psi$ such that $\Phi_{i}$ is adjacent to $\Phi_{i+1}$ for $0\le i\le k-1$. A {\em section} $G/F$ of a convex polytope $P$ is the collection of all faces $H$ of $P$ such that $F\le H\le G$. A convex polytope $P$ is even {\em strongly flag connected}, meaning that every section of $P$ (including $P$ itself) is flag connected. The face lattice of a convex polytope $P$ satisfies the following conditions (and many others) \cite{Zie95,McMSch02}:
\begin{enumerate}
\item $P$ has a least face ($\emptyset$) and a greatest face ($P$ itself).
\item Every flag of $P$ has exactly $n+1$ elements in it.
\item $P$ is strongly flag-connected.
\item Given faces $F<G$ of $P$ such that ${\rm rank}(G)-{\rm rank}(F)=2$, there exist exactly two distinct faces $H_{1},H_{2}$ such that $F<H_{1},H_{2}<G$.
\end{enumerate}
This last property is known as the {\em diamond condition}. 

Any partially ordered set whose elements (which we will call {\em faces}) satisfy the four conditions above is an {\em abstract polytope of rank $n$}. Thus convex polytopes are abstract polytopes. Note that any section of an abstract polytope is itself an abstract polytope with rank given by $({\rm rank}(G)-{\rm rank}(F)-1)$, with $F$ playing the role of the empty set and $G$ playing the role of the maximal face. For convex polytopes we use the terms ``rank" and ``dimension" interchangeably. 

{A \emph{vertex} of a convex or abstract polytope is a face of rank 0. A \emph{vertex figure} of a convex $n$-polytope $P$ at a vertex $v$ is a convex $(n-1)$-polytope obtained by intersecting $P$ with a hyperplane~$H$ that separates $v$ from the other vertices of $P$. Irrespective of the choice of such a separating hyperplane~$H$, the combinatorial type of a vertex figure at $v$ is the same; in fact, the faces of a vertex-figure are in one-to-one correspondence with the faces of $P$ that contain~$v$. Thus we often speak of {\it the} vertex-figure of $P$ at $v$. The \emph{vertex figure} of an abstract $n$-polytope $\mathcal{P}$ at a vertex $v$ is the partially ordered set consisting of all faces of $\mathcal{P}$ that are incident with $v$; this is an abstract polytope in its own right, of rank $n-1$. This terminology is consistent with that for convex polytopes:\ the common face lattice of the different vertex-figures at a vertex of a convex polytope is just the abstract vertex-figure of the face lattice of the convex polytope at that vertex. }

{The standard \emph{barycentric subdivision} of a convex $n$-polytope $P$ is the geometric simplicial complex of dimension $n$, whose $n$-simplices are precisely the convex hulls of the centroids of the faces in flags of $P$ (see \cite{Bay88,GooORo04} or \cite[Sect. 2C]{McMSch02}). We will use the term ``barycentric subdivision" more broadly and allow the centroid of a face to be replaced by a relative interior point of that face. Thus a barycentric subdivision of $P$ is an $n$-dimensional geometric simplicial complex with one vertex in the relative interior of each non-empty face of $P$, and with one $n$-dimensional simplex per flag of $P$, such that the vertices of an $n$-simplex are precisely the relative interior points chosen in the faces in the corresponding flag. Each barycentric subdivision of $P$ is isomorphic (as an abstract simplicial complex) to the order complex of the face lattice of $P$; in particular, any two barycentric subdivisions are isomorphic.

There is a similar notion of barycentric subdivision for the boundary complex of a convex polytope. Recall that the \emph{boundary complex} of a convex $n$-polytope $P$, denoted $\rm{bd}(P)$, is the set of faces of $P$ of rank less than $n$, partially ordered by inclusion (see~\cite[p. 40]{Gru03}); this complex tessellates the boundary $\partial P$ of $P$ and is topologically a sphere. }

\section{The Construction}

The goal of this section is to establish Theorems~\ref{grpol} and~\ref{Pconvexly} stating (together) that any finite group $\Gamma$ is the automorphism group of a convex polytope.

{ It is convenient to settle the case of cyclic groups $\Gamma=C_k$ upfront. Figure~\ref{fig:C3} illustrates, for $k=3$, how for $k\geq 3$ the $1$-skeleton of a convex polyhedron in $\mathbb{E}^3$ with automorphism group and symmetry group $C_k$ can be constructed. The polyhedron itself may be achieved geometrically by an appropriate choice of points in $\mathbb{E}^3$. For $C_{2}$ we observe that any line segment exhibits this group as its group of either geometric or combinatorial automorphisms; and the trivial group, $C_1$, similarly occurs for a $0$-dimensional polytope, a point. Thus all cyclic groups occur as automorphism groups and symmetry groups of convex polytopes.}

\begin{figure}[htbp]
\begin{center}
\includegraphics[width=2in]{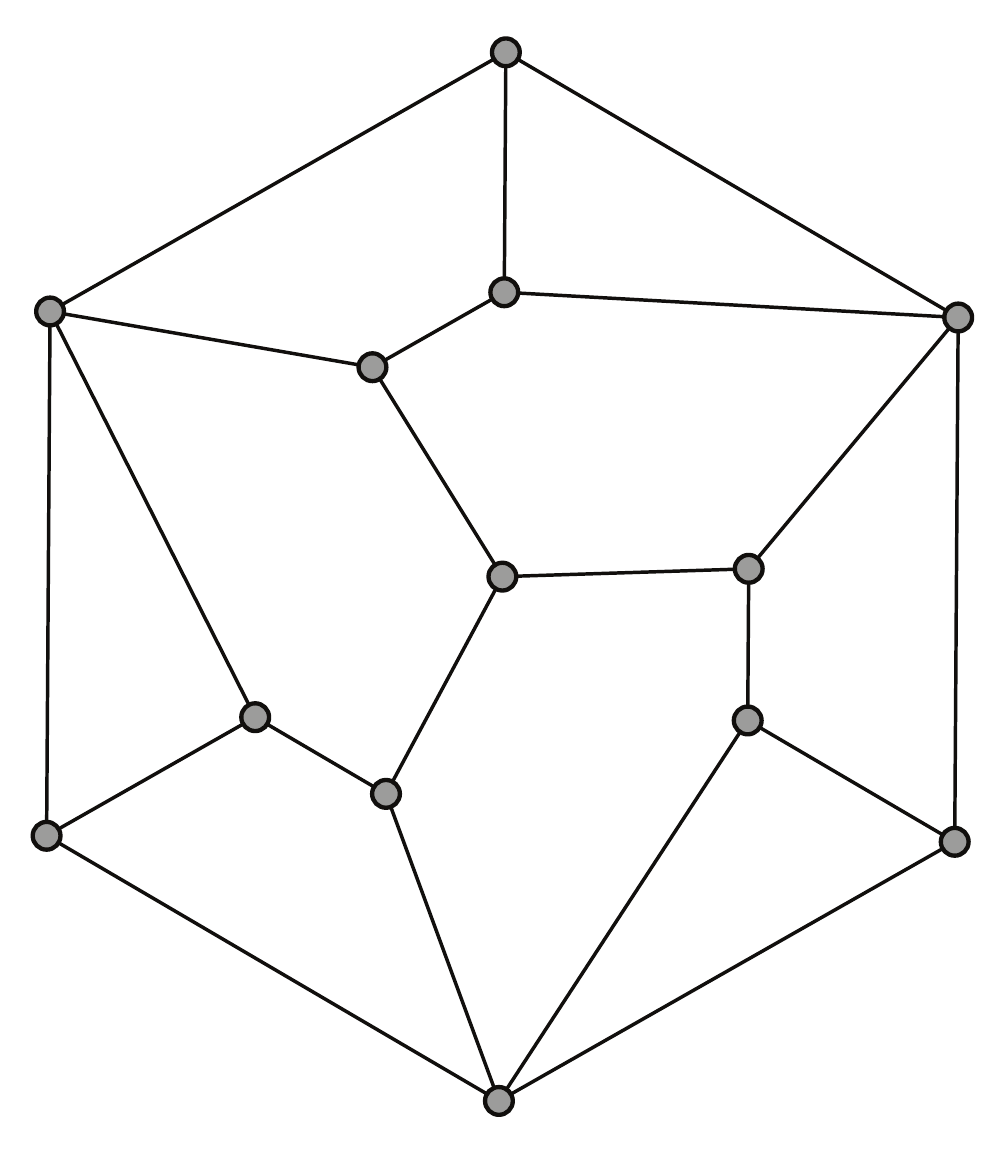}
\caption{The 1-skeleton of a polyhedron with automorphism and symmetry group $C_{3}$.}
\label{fig:C3}
\end{center}
\end{figure}

{Now let $\Gamma$ be a non-cyclic finite group, so $\Gamma$ has order at least $4$. (Our construction goes through for cyclic groups as well but some arguments become slightly more complicated.)} Then $\Gamma$ can be viewed as a subgroup of the symmetric group $S_{n+1}$ for some~$n$. 

Consider the $n$-simplex $T$ in $\mathbb{E}^{n+1}$ with vertices given by the canonical base points (vectors)  $e_1,\ldots,e_{n+1}$ in $\mathbb{E}^{n+1}$. Then $T$ lies in the hyperplane $E$ of $\mathbb{E}^{n+1}$ given by $\sum_{i=1}^{n+1}x_{i}=1$. The symmetry group $G(T)$ of $T$ (in $E$) is isomorphic to $S_{n+1}$ and hence can be identified with~$S_{n+1}$. Thus $\Gamma$ is a subgroup of $G(T)$.

Consider the barycentric subdivision $\mathcal{C}(T)$   of the boundary complex ${\rm bd}(T)$ of $T$ in $E$.  This simplicial $(n-1)$-complex is a refinement of ${\rm bd}(T)$, and its maximal simplices, called {\em chambers\/}, all have dimension $n-1$. The symmetry group $G(T)$ acts simply transitively on the chambers, and each chamber of $\mathcal{C}(T)$ is a fundamental region for the action of $G(T)$ on the boundary $\partial T$ of~$T$. The barycentric subdivision $\mathcal{C}(T)$ is a realization of the Coxeter complex for the Coxeter group $A_{n+1}$ (see \cite{McMSch02}, Sect. 3A,B). 

Let $C$ be a chamber of $\mathcal{C}(T)$, which we take to be the fundamental region for $G(T)$. Pick a point $v$, the {\em initial vertex\/}, in the relative interior of $C$ and define
\[ T' := {\rm conv} \{ \gamma(v) \mid \gamma \in  G(T) \} \]
and
\[ Q := {\rm conv} \{ \gamma(v) \mid \gamma \in  \Gamma \}. \]
Then $T'$ and $Q$ are convex polytopes in $E$, and $T'$ has full dimension $n$ in $E$. In the notation of Coxeter~\cite{Cox40,Cox85}, $T'$ is given by the diagram 
\begin{equation}
\label{2kgp}
\centering
\begin{picture}(180,30)
\put(100,0){
\multiput(15,0)(45,0){2}{\circle*{5}}
\multiput(-130,0)(45,0){2}{\circle*{5}}
\put(-130,0){\circle{9}}
\put(-85,0){\circle{9}}
\put(15,0){\circle{9}}
\put(60,0){\circle{9}}
\put(-5,0){\line(1,0){20}}
\put(-85,0){\line(1,0){20}}
\put(-130,0){\line(1,0){45}}
\put(15,0){\line(1,0){45}}
\put(4,9){\scriptsize $n-1$}
\put(57,9){\scriptsize $n$}
\put(-133,9){\scriptsize $0$}
\put(-87,9){\scriptsize $1$}
\put(-49,-0.5){$\ldots\ldots$}}
\end{picture}
\end{equation}
The convex polytope $Q$ may have dimension smaller than $n$. Let $d$ denote the dimension of $Q$.  Then $d\geq 2$, since $\Gamma$ has order at least $4$.

Then we have the following lemma.

\begin{lemma}\label{propQ}
$Q$ is a convex $d$-polytope in $E$ on which $\Gamma$ acts simply vertex-transitively as a group of symmetries. In particular, $\Gamma$ is a subgroup of the symmetry group $G(Q)$ of $Q$.
\end{lemma}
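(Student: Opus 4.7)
My plan is to establish the three assertions of the lemma in sequence: that $\Gamma$ acts on $Q$ by isometries, that this action on the orbit $\Gamma\cdot v$ is free, and that this orbit is in fact the full vertex set of $Q$.

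First I would observe that every $\gamma\in\Gamma\le G(T)$ is an isometry of $E$, and since $\gamma$ merely permutes the generating set $\{\gamma'(v)\mid \gamma'\in\Gamma\}$ of $Q$, it maps $Q$ isometrically onto itself. Hence $\Gamma\le G(Q)$, which already gives the last sentence of the lemma and shows that $\Gamma$ acts on $Q$ by symmetries. Next I would argue that the stabilizer of $v$ in $\Gamma$ is trivial: since $G(T)$ acts simply transitively on the chambers of $\mathcal{C}(T)$, it acts freely on the relative interior of any one chamber. As $v$ was chosen in the relative interior of the fundamental chamber $C$, no nontrivial element of $G(T)$ (and in particular of $\Gamma$) can fix $v$. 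Consequently $|\Gamma\cdot v|=|\Gamma|$ and $\Gamma$ acts transitively and freely on this orbit.

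The main point remaining, and the one I regard as the crux, is to verify that every element of $\Gamma\cdot v$ is actually a vertex of $Q$ rather than merely a generator of its convex hull. For this I would use a sphere argument. Let
\[
\bar v \;:=\; \frac{1}{|\Gamma|}\sum_{\gamma\in\Gamma}\gamma(v).
\]
Each $\gamma'\in\Gamma$ permutes the summands, so $\gamma'(\bar v)=\bar v$; that is, $\bar v$ is fixed by $\Gamma$. Since $\Gamma$ acts by isometries and the orbit $\Gamma\cdot v$ is equidistant from $\bar v$, all points of the orbit lie on a common sphere $S$ centered at $\bar v$. But $Q$ is contained in the closed ball bounded by $S$, and any point of a convex set that lies on a supporting sphere (i.e., on the boundary sphere of an enclosing ball) is necessarily an extreme point of that convex set. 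Hence each $\gamma(v)$ is a vertex of $Q$, and since vertices are always contained among the generators of the convex hull, the vertex set of $Q$ is exactly $\Gamma\cdot v$.

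Combining these observations, $\Gamma$ acts simply vertex-transitively on $Q$ by symmetries, completing the proof. The only nonroutine ingredient is the sphere/extreme-point argument used to identify the orbit with the vertex set; the freeness and the containment $\Gamma\le G(Q)$ are immediate from the chamber geometry and the defining convex hull. Note also that $d$ is simply the affine dimension of the finite point set $\Gamma\cdot v$, so the assertion that $Q$ is a convex $d$-polytope is a matter of definition once we know $Q$ is the convex hull of finitely many points.
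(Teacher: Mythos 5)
Your proposal is correct, and all three of its components (the containment $\Gamma\le G(Q)$, the freeness of the action at $v$, and the identification of the orbit $\Gamma\cdot v$ with the vertex set of $Q$) are genuine obligations that a complete proof must discharge. Where you differ from the paper is in the crux step. The paper's two-sentence proof leans on the auxiliary polytope $T'={\rm conv}\{\gamma(v)\mid\gamma\in G(T)\}$: since $G(T)$ acts simply transitively on chambers and $v$ lies in the relative interior of one, the full orbit $G(T)\cdot v$ is the vertex set of $T'$, and the generating points of $Q$, being vertices of the larger polytope $T'$ containing $Q$, are automatically extreme points of $Q$. You instead work entirely inside $Q$: the $\Gamma$-fixed centroid $\bar v$ of the orbit gives a circumscribed sphere through all of $\Gamma\cdot v$, and strict convexity of the ball forces every orbit point to be extreme. (Your argument implicitly needs $v\neq\bar v$, but that is immediate from the freeness you already established together with $|\Gamma|>1$.) Both routes are standard; yours is self-contained and would work for any finite isometry group acting freely on a point, while the paper's is shorter because $T'$ and the simple transitivity on chambers have already been set up. A still quicker variant, worth noting, is that the nonempty vertex set of $Q$ is a $\Gamma$-invariant subset of the transitive orbit $\Gamma\cdot v$ and hence equals it.
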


\begin{proof}
This follows directly from the fact that $G(T)$ acts simply transitively on the chambers of $\mathcal{C}$ and thus  also on the vertices of $T'$. The vertices of $Q$ are among those of $T'$.
\end{proof}

If the combinatorial automorphism group $\Gamma(Q)$ of $Q$, which contains $G(Q)$ as a subgroup, already coincides with $\Gamma$, then we are done, since then $Q$ is a convex (and hence abstract) polytope with $\Gamma(Q)=\Gamma$. However, this is generally not the case. In fact, there are simple examples where already the subgroup $G(Q)$ is strictly larger than $\Gamma$. 

{ We first dispose of the case when $Q$ lies in a 2-dimensional flat. Then $Q$ is a polygon and $\Gamma(Q)$ is a dihedral group. Every subgroup of a dihedral group is cyclic or dihedral. Hence, being non-cyclic, $\Gamma$ itself must be dihedral. But then $\Gamma=\Gamma(Q)$, by the vertex-transitivity, and we are done.}

The remainder of our construction applies to the cases when the dimension of Q is greater than or equal to 3.

Suppose from now on that $\Gamma$ is a proper subgroup of $\Gamma(Q)$, and $Q$ is a convex polytope of dimension 3 or higher. Clearly $\Gamma(Q)$ also acts transitively on the vertices of $Q$, and since $\Gamma$ is a proper subgroup, the stabilizer $\Gamma_v(Q)$ of the initial vertex $v$ of $Q$ in $\Gamma(Q)$ is nontrivial. Note that $\Gamma_v(Q)$ permutes the vertices adjacent to $v$ in $Q$, as well as the facets of $Q$ containing $v$. Our strategy is to alter (in fact, refine) the structure of $Q$ in such a way that all automorphisms in the vertex-stabilizer are destroyed. The result is a spherical (abstract) polytope whose automorphism group is given by $\Gamma$.

Consider the barycentric subdivision $\mathcal{C}(Q)$ of the boundary complex ${\rm bd}(Q)$ of $Q$ in $E$. Then $\mathcal{C}(Q)$ is a simplicial $(d-1)$-complex that refines ${\rm bd}(Q)$, and can be viewed as a realization of the order complex of ${\rm bd}(Q)$ (see~\cite{McMSch02}, Sect. 2C). The simplices in $\mathcal{C}(Q)$ correspond to chains (totally ordered subsets) in the poset ${\rm bd}(Q)$, with the chambers (maximal simplices) corresponding to the flags of ${\rm bd}(Q)$. In particular, $\mathcal{C}(Q)$ has the structure of a {\em labelled\/} simplicial complex, in which every simplex is labelled by the set of ranks of the faces in the chain of ${\rm bd}(Q)$ represented by the simplex. Thus the vertices of $\mathcal{C}(Q)$ can be labelled by integers from $0,\ldots,d-1$. The vertices of $\mathcal{C}(Q)$ with label $0$ are precisely the vertices of $Q$. 

The vertex-stars in $\mathcal{C}(Q)$ are topologically $(d-1)$-balls and hence the vertex-links are $(d-2)$-spheres. (Recall that the {\em vertex-star\/} of a vertex $x$ in a simplicial complex $\mathcal{S}$, denoted ${\rm star}_{\mathcal{S}}(x)$, consists of all the simplices that contain $x$, and all their faces. The {\em vertex-link\/} of $x$ in $\mathcal{S}$ consists of all simplices in ${\rm star}_{\mathcal{S}}(x)$ that do not contain $x$.) The vertex-star in $\mathcal{C}(Q)$ at a vertex of $Q$ is isomorphic to the barycentric subdivision of the vertex-figure of $Q$ at that vertex; the corresponding vertex-link in $\mathcal{C}(Q)$ is isomorphic to the barycentric subdivision of the boundary complex of this vertex-figure. 

Note that $\Gamma(Q)$ acts on $\mathcal{C}(Q)$ as a group of automorphisms of a simplicial complex; in fact, since $Q$ is a convex polytope, this action is even by PL-homeomorphisms (every automorphism can be realized by a PL-homeomorphism). Moreover, since $\Gamma(Q)$ acts freely on the flags of $Q$, it also acts freely on the chambers of $\mathcal{C}(Q)$. 

A key step in the construction consists of chamber replacement by complexes made up of Schlegel diagrams of convex polytopes (see \cite{Gru03}); in other words, these new complexes are inserted into the chambers of $\mathcal{C}(Q)$.  In a first step we are using Schlegel diagrams of $d$-crosspolytopes, as these allow us to perform certain tasks successively rather than simultaneously. 

The Schlegel diagram $\mathcal{D}$ of a $d$-crosspolytope has dimension $d-1$ and consists of an {\em outer\/} $(d-1)$-simplex $D$, tiled in a face-to-face manner by $(d-1)$-simplices, the {\em simplex tiles\/} of~$\mathcal{D}$, each corresponding to a facet of the crosspolytope distinct from the facet that defines~$D$. Among the simplex tiles of $\mathcal{D}$ is a {\em central\/} $(d-1)$-simplex $Z$, corresponding to the facet opposite to the facet defining $D$. Note that $D$ and $Z$ have no vertices in common.

We will exploit two basic properties of these Schlegel diagrams. First, the simplex tiles of $\mathcal{D}$ that are adjacent to $Z$ (i.e., intersecting $Z$ in a common facet) have precisely one vertex in common with $D$; conversely, every vertex $u$ of $D$ is a vertex of precisely one simplex tile, $F_u$ (say), that is adjacent to $Z$. Second, if $D'$ is any $(d-1)$-simplex in a Euclidean space, then any affine transformation that maps $D$ to $D'$ takes the given Schlegel diagram $\mathcal{D}$ to a new Schlegel-diagram $\mathcal{D}'$ affinely equivalent to~$\mathcal{D}$. Note here that we can arbitrarily preassign the images of the vertices of $D$ among those of $D'$ and still find a suitable affine transformation that transforms the vertices accordingly. This second property extends to any Schlegel diagram supported on an $(d-1)$-simplex.

The next step is to modify $\mathcal{D}$ in such a way that the vertices in the outer simplex $D$ acquire very high valencies compared with those in the interior, and that the valencies of the vertices of $D$ are very far apart from each other. To this end, let $u_0,\ldots,u_{d-1}$ be a fixed labeling of the vertices of $D$, and let $F_{u_0},\ldots,F_{u_{d-1}}$ be the corresponding simplex tiles with vertices $u_0,\ldots,u_{d-1}$, respectively, adjacent to $Z$. We now replace every simplex tile $F_{u_i}$ of $\mathcal{D}$ by the Schlegel diagram of an affine image of a suitable convex $d$-polytope $R_i$. All vertices of this polytope $R_i$, save one, have small valencies but the exceptional vertex has a large valency given by an integer $m_i$ yet to be determined. An example of a polytope of this kind is the pyramid over a {\it simple\/} convex $(d-1)$-polytope that has $m_i$ vertices and at least one facet which is a simplex; then the pyramid itself has a simplex facet with the apex as a vertex of valency $m_i$. Suppose $R_i$ is a pyramid of this kind. Then $R_i$ admits a Schlegel diagram $\mathcal{R}_i$ whose outer simplex corresponds to a simplex facet of $R_i$ that contains the apex. In this diagram, the outer vertex representing the apex has valency $m_i$ while all other vertices have (small) valency $d$. Now choose an affine transformation that maps the outer simplex of $\mathcal{R}_i$ to the simplex tile $F_{u_i}$ of $\mathcal{D}$, and  the vertex corresponding to the apex to the distinguished vertex $u_i$ of $F_{u_i}$. Then insert the corresponding affine image of the Schlegel diagram $\mathcal{R}_i$ into the simplex $F_{u_i}$, such that $F_{u_i}$ becomes the outer simplex. If this procedure is carried out for each $i=0,\ldots,d-1$, the result is a $(d-1)$-dimensional complex $\mathcal{R}$ supported on $D$, in which each vertex $u_i$ of $D$ has a large valency, namely $m_{i}+d-1$, while all vertices of $\mathcal{R}$ not in $D$ have small valencies. 

We require one additional type of modification, now targeting the $(d-1)$-simplex of $\mathcal{R}$ that was the central simplex of $\mathcal{D}$. If $L$ is a {\it simplicial} convex $d$-polytope, then we let $\mathcal{R}^L$ denote the $(d-1)$-dimensional complex supported on $D$, in which the central simplex has been replaced by a suitable affine copy of a Schlegel diagram of $L$.

Now recall that $Q$ is a convex $d$-polytope in $E$ on which the given finite group $\Gamma$ acts simply vertex-transitively as a group of symmetries. Consider the vertex-star ${\rm star}_{\mathcal{C}(Q)}(v)$ of the initial vertex $v$ of $Q$ in the barycentric subdivision $\mathcal{C}(Q)$ of the boundary complex ${\rm bd}(Q)$ of $Q$. The chambers in ${\rm star}_{\mathcal{C}(Q)}(v)$ are precisely the chambers of $\mathcal{C}(Q)$ that have $v$ as a vertex, so clearly these chambers are permuted among each other by the elements of the vertex stabilizer group $\Gamma_v(Q)$ of $v$ in the full automorphism group $\Gamma(Q)$ of $Q$. Recall also that $\mathcal{C}(Q)$ is a labelled simplicial complex in which the vertices of a chamber are labelled $0,\ldots,d-1$ such that no two vertices are labelled the same. 

Now, for each chamber $C$ in ${\rm star}_{\mathcal{C}(Q)}(v)$, choose a simplicial convex $d$-polytope $L_C$, in such a way that the polytopes for any two distinct chambers have different numbers of vertices (and hence are not combinatorially isomorphic). Thus each chamber $C$ in ${\rm star}_{\mathcal{C}(Q)}(v)$ gives us a $(d-1)$-dimensional complex $\mathcal{R}^{L_C}$ supported on $D$, obtained by inserting an affine copy of the Schlegel diagram of $L_C$ into the simplex of $\mathcal{R}$ that corresponds to the central simplex of $\mathcal{D}$. Note that no two complexes $\mathcal{R}^{L_C}$ are combinatorially isomorphic since their numbers of vertices are distinct.

In the final step of the construction we first replace each chamber $C$ in ${\rm star}_{\mathcal{C}(Q)}(v)$ by an affine copy of the corresponding complex $\mathcal{R}^{L_C}$ such that, for each $i=0,\ldots,d-1$, the vertex $u_i$ of $D$ is mapped onto the vertex of $C$ labelled $i$. We then exploit $\Gamma$ to carry this new structure to the vertex-stars ${\rm star}_{\mathcal{C}(Q)}(w)$ in $\mathcal{C}(Q)$ at other vertices $w$ of $Q$. More precisely, if $w$ is a vertex of $Q$, and $\gamma$ denotes the unique element of $\Gamma$ such that $\gamma(v)=w$, then $\gamma$ maps chambers of ${\rm star}_{\mathcal{C}(Q)}(v)$ to chambers of ${\rm star}_{\mathcal{C}(Q)}(w)$ while preserving the types $i$ of vertices of chambers for each $i=0,\ldots,d-1$.
Hence, if $C'$ is a chamber of ${\rm star}_{\mathcal{C}(Q)}(w)$ and $C'=\gamma(C)$ for some chamber $C$ of ${\rm star}_{\mathcal{C}(Q)}(v)$, then we replace $C'$ by an affine copy of the complex $\mathcal{R}^{L_C}$ that we used for $C$, such that, for each $i=0,\ldots,d-1$, the vertex $u_i$ of $D$ is mapped onto the vertex of $C'$ labelled $i$. In short, with respect to insertion of diagrams we treat $C$ and $C'$ equivalently. If we proceed in this way for every vertex $w$ of $Q$, the final result is a new $(d-1)$-dimensional complex $\mathcal{C}'$, which is a refinement of the barycentric subdivision $\mathcal{C}(Q)$ and has the full $(d-2)$-skeleton of $\mathcal{C}(Q)$ as a subcomplex, unrefined. In particular, $\mathcal{C}'$ tiles the boundary $\partial Q$ of $Q$ and hence is topologically a $(d-1)$-sphere. Moreover, by construction, $\Gamma$ acts on $\mathcal{C}'$ as a group of automorphisms. By adjoining suitable improper faces (of ranks $-1$ and $d$) to $\mathcal{C}'$ we obtain a spherical abstract $d$-polytope, denoted $\mathcal{P}$. 

We still need to describe the choice of the parameters $m_0,\ldots,m_{d-1}$. Let $Q$ be as before, and let $\mathcal{C}(Q)$ be its barycentric subdivision. { For a vertex $u$ of $\mathcal{C}(Q)$ let $s_u$ denote the number of chambers containing $u$; note that $s_{u}$ is just the number of flags of $Q$ containing  the face of $Q$ that corresponds to~$u$. If} $x$ is a vertex of any complex $\mathcal{S}$, we also write ${\rm val}_\mathcal{S}(x)$ for its valency in the edge graph of $\mathcal{S}$.

Now the valencies of the vertices in $\mathcal{P}$ (or $\mathcal{C}'$) are as follows. If $x$ is a vertex of $C(Q)$ of type $i$, then $x$ is a vertex of type $i$ in every chamber containing $x$ and therefore
\begin{equation}
\label{valxi}
{\rm val}_\mathcal{P}(x) = {\rm val}_{\mathcal{C}(Q)}(x) + s_xm_{i}.
\end{equation}
If $x$ is a vertex of the {\it central\/} simplex in the complex $\mathcal{R}^{L_C}$ inserted into a chamber $C$, then
\[ {\rm val}_\mathcal{P}(x) = 2(d-1) +({\rm val}_{L_C}(x) - (d-1)) = {\rm val}_{L_C}(x) + d-1 .\]
Further, ${\rm val}_\mathcal{P}(x)=d$, if $x$ is a vertex of the copy of $R_i$ that is not a vertex of a chamber $C$ of $\mathcal{C}(Q)$ or of the central simplex inside $C$; and ${\rm val}_\mathcal{P}(x)={\rm val}_{L_C}(x)$, if $x$ is a vertex of the copy of $L_C$ that is not a vertex of the central simplex in a chamber $C$. In particular, this shows that there exists a constant $m$ (depending on $d$ and the polytopes $L_C$) such that 
\begin{equation}
\label{emm}
{\rm val}_\mathcal{P}(x) \leq m 
\end{equation}
for all vertices $x$ of $\mathcal{P}$ which are not vertices of $\mathcal{C}(Q)$, independent of the choice of { the parameters} $m_{i}$. 

{ At this point we still have the choice of the parameters $m_0,\ldots,m_{d-1}$ at our disposal. Before we proceed with their definition we need one more piece of notation. Suppose for a moment that a specific parameter value $m_i$ has been chosen and then substituted in equation (\ref{valxi}) to provide certain vertex valencies in \mc P. In this situation we write $a_{i}$ and $b_{i}$ for the minimum or maximum valency in \mc P, respectively, taken over all vertices $x$ in $\mathcal{C}(Q)$ of type $i$, as given in (\ref{valxi}). Thus $a_{i}\leq {\rm val}_\mathcal{P}(x) \leq b_{i}$ for each vertex $x$ of $\mathcal{C}(Q)$ of type $i$. } 

Now the parameters $m_i$ are determined inductively for $i=d-1,d-2,\ldots,0$, beginning with $m_{d-1}:=m$ where $m$ is as in (\ref{emm}). {Using the notation just introduced, we then have $m< a_{d-1}\leq b_{d-1}$.} Next choose $m_{d-2}$ in such a way that $b_{d-1}<a_{d-2}$. More generally, {if $j\leq d-1$ and $m_{j}$ has already been chosen}, we pick $m_{j-1}$ in such a way that $b_{j}<a_{j-1}$. When $j=1$ this gives us $m_0$. Our choices guarantee that
\begin{equation}
\label{ab}
m< a_{d-1}\leq b_{d-1} < a_{d-2}\leq b_{d-2} <\ldots\ldots < a_{1}\leq b_{1}<a_{0}\leq b_{0}.
\end{equation}
In particular, if we set $M_{i}:=[a_{i},\,b_{i}]$ for each $i$, then the intervals $M_0,\ldots,M_{d-1}$ are mutually disjoint. 

\begin{lemma}
\label{aut}
If the parameters $m_0,\ldots,m_{d-1}$ are chosen in such a way that (\ref{ab}) holds, then $\Gamma(\mathcal{P})=\Gamma$.
\end{lemma}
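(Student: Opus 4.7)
The inclusion $\Gamma \subseteq \Gamma(\mathcal{P})$ is built into the construction, so the task is the reverse inclusion. I would take an arbitrary $\sigma \in \Gamma(\mathcal{P})$ and show $\sigma \in \Gamma$ by first restricting $\sigma$ to the subcomplex $\mathcal{C}(Q) \subset \mathcal{P}$, then exploiting the simply vertex-transitive action of $\Gamma$ on $Q$ from Lemma~\ref{propQ} together with the pairwise non-isomorphic inserted complexes $\mathcal{R}^{L_C}$ at chambers in $\mathrm{star}_{\mathcal{C}(Q)}(v)$.

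The first step is the valency bookkeeping. Any $\sigma \in \Gamma(\mathcal{P})$ preserves edge-graph valencies; by (\ref{emm}) every vertex of $\mathcal{P}$ not in $\mathcal{C}(Q)$ has valency at most $m < a_{d-1}$, while by (\ref{ab}) every $\mathcal{C}(Q)$-vertex of type $i$ has valency in the interval $M_i = [a_i, b_i]$, and $M_0, \ldots, M_{d-1}$ are pairwise disjoint. Hence $\sigma$ preserves the vertex set of $\mathcal{C}(Q)$ and the type of each vertex. Since the chamber replacements never introduce edges between two $\mathcal{C}(Q)$-vertices (the outer simplex of each inserted complex is the chamber itself, whose edges already lie in $\mathcal{C}(Q)$), the induced subgraph on the $\mathcal{C}(Q)$-vertices is exactly the $1$-skeleton of $\mathcal{C}(Q)$. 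Because in the order complex $\mathcal{C}(Q)$ the cliques of the $1$-skeleton coincide with chains and hence with simplices, $\sigma$ restricts to a label-preserving simplicial automorphism of $\mathcal{C}(Q)$, equivalently to an automorphism $\bar\sigma \in \Gamma(Q)$.

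Next I would show $\bar\sigma \in \Gamma$. Let $w := \bar\sigma(v)$ and let $\gamma$ be the unique element of $\Gamma$ with $\gamma(v) = w$. For each chamber $C \in \mathrm{star}_{\mathcal{C}(Q)}(v)$, its image $\bar\sigma(C)$ lies in $\mathrm{star}_{\mathcal{C}(Q)}(w)$, and by construction the inserted complex at any $C' \in \mathrm{star}_{\mathcal{C}(Q)}(w)$ is $\mathcal{R}^{L_{\gamma^{-1}(C')}}$. Since $\sigma$ carries the insertion at $C$ isomorphically onto the insertion at $\bar\sigma(C)$, and since the complexes $\mathcal{R}^{L_C}$ for different $C \in \mathrm{star}_{\mathcal{C}(Q)}(v)$ are pairwise combinatorially non-isomorphic by the distinct vertex counts of the $L_C$, this forces $\bar\sigma(C) = \gamma(C)$ for every $C \in \mathrm{star}_{\mathcal{C}(Q)}(v)$. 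Label-preservation upgrades this to pointwise fixing by $\gamma^{-1}\bar\sigma$, which therefore fixes every flag of $Q$ through $v$. The standard diamond-condition / flag-connectivity argument (the unique $i$-adjacent flag of a fixed flag is itself fixed, propagated along the flag graph) then yields $\gamma^{-1}\bar\sigma = \mathrm{id}_Q$, so $\bar\sigma = \gamma \in \Gamma$.

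The remaining task is to show the restriction map $\sigma \mapsto \bar\sigma$ is injective, i.e., that any $\sigma \in \Gamma(\mathcal{P})$ fixing every vertex of $\mathcal{C}(Q)$ must be the identity. I expect this to be the main obstacle: such a $\sigma$ acts on each inserted complex $\mathcal{R}^{L_C}$ as a combinatorial automorphism fixing the outer simplex $D$ pointwise, and one must argue no non-trivial such automorphism exists. The distinct large valencies $m_0, \ldots, m_{d-1}$ already single out each apex $u_i$ of the pyramid $R_i$ inserted into the tile $F_{u_i}$, so one needs rigidity of the remaining Schlegel structure and of the inserted Schlegel of $L_C$ in the central simplex; a judicious choice of the base polytopes $P_i$ of the pyramids and of the simplicial polytopes $L_C$ should secure this. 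Once the kernel is trivial, $\Gamma(\mathcal{P})$ embeds into $\Gamma$, and combined with $\Gamma \subseteq \Gamma(\mathcal{P})$ this gives $\Gamma(\mathcal{P}) = \Gamma$.
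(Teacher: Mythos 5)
Your argument tracks the paper's own proof closely through the valency bookkeeping, the reduction of $\sigma$ to an automorphism $\bar\sigma$ of $Q$, and the use of the pairwise non-isomorphic complexes $\mathcal{R}^{L_C}$ to kill the stabilizer of $v$; your route from the edge graph to $\Gamma(Q)$ via the flag-complex property of the order complex is a reasonable substitute for the paper's appeal to the unrefined $(d-2)$-skeleton. The problem is the step you flag as ``the main obstacle'' and then leave open: showing that a $\sigma \in \Gamma(\mathcal{P})$ inducing the identity on $\mathcal{C}(Q)$ must be the identity on all of $\mathcal{P}$. As written the proof is incomplete there, and your proposed repair --- imposing extra ``rigidity'' on the pyramids $R_i$ and the simplicial polytopes $L_C$ --- points in the wrong direction: no such hypotheses are needed, and none are available, since the lemma's only hypothesis is (\ref{ab}).

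The missing idea is a propagation argument inside each chamber, and it uses exactly the fact you already invoked at the end of your second step: an automorphism of an abstract polytope that fixes a flag is the identity. Each inserted complex $\mathcal{R}^{L_C}$ is a face-to-face tiling of the chamber in which every tile can be joined to the outer simplex by a chain of successively adjacent tiles, consecutive ones sharing a facet. If $\sigma$ is the identity on the face lattice of one tile, it fixes the shared facet pointwise; the next tile is the unique other tile containing that facet, hence is mapped to itself, and $\sigma$ restricted to it fixes a flag and is therefore the identity on its face lattice as well. Starting from the outer simplex --- which is fixed pointwise because $\sigma$ is the identity on $\mathcal{C}(Q)$, and which cannot be swapped with the chamber's exterior because $\sigma$ is realizable by a homeomorphism --- this walks $\sigma=\mathrm{id}$ across every tile of every inserted complex, for arbitrary choices of the $R_i$ and $L_C$. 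With that step supplied, your proof is complete and is essentially the paper's.
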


\begin{proof}
By construction $\Gamma$ is a subgroup of $\Gamma(\mathcal{P})=\Gamma(\mathcal{C}')$. We need to establish that $\Gamma$ is the full automorphism group of $\mathcal{P}$. 

We first show that every automorphism of $\mathcal{P}$ is induced by an automorphism of $Q$. To this end
suppose $\gamma$ is an automorphism of $\mathcal{P}$. Then $\gamma$ maps vertices of $\mathcal{P}$ to vertices with the same valency in the edge graph of $\mathcal{P}$. The vertices of $\mathcal{P}$ corresponding to vertices of $\mathcal{C}(Q)$ have a higher valency than other vertices of $\mathcal{P}$ and hence must be permuted among each other by $\gamma$. In other words, $\gamma$ maps vertices of $\mathcal{C}(Q)$ to vertices of $\mathcal{C}(Q)$. By construction, for each $i=0,\ldots,d-1$, the valency of each vertex of $\mathcal{C}(Q)$ of type $i$ lies in $M_i$. Hence, since $M_i$ and $M_j$ are mutually disjoint for $i\neq j$, the automorphism $\gamma$ must necessarily map vertices of $\mathcal{C}(Q)$ of type $i$ to vertices of $\mathcal{C}(Q)$ of the same type, $i$, for each $i$. 

Now since the full $(d-2)$-skeleton of $\mathcal{C}(Q)$ is an (unrefined) subcomplex of the $(d-1)$-dimensional complex $\mathcal{C}'$, the automorphism $\gamma$ induces a vertex-type preserving automorphism of~$\mathcal{C}(Q)$; note here that the $(d-2)$-skeleton of~$\mathcal{C}(Q)$ already contains all the information about~$\mathcal{C}(Q)$, since only chambers need to be added to obtain the full complex (bear in mind that~$\mathcal{C}(Q)$ lives on a sphere). In particular, since the vertices of $\mathcal{C}(Q)$ corresponding to vertices of $Q$ are precisely the vertices of $\mathcal{C}(Q)$ of type $0$, the vertices of $Q$ then must be permuted by $\gamma$. 

Thus $\gamma$ induces a vertex-type preserving automorphism of $\mathcal{C}(Q)$ mapping vertices of $Q$ to vertices of $Q$. It follows that $\gamma$ induces an automorphism of $Q$ itself. In fact, every face of $Q$ is uniquely determined by the set of flags of $Q$ containing this face, so every vertex of $\mathcal{C}(Q)$ is uniquely determined by the chambers of $\mathcal{C}(Q)$ containing this vertex. Now if $F$ is an $i$-face of $Q$ and $w_F$ is the corresponding vertex of type $i$ in $\mathcal{C}(Q)$, then $\gamma(w_F)$ is also a vertex of type $i$ in $\mathcal{C}(Q)$ and hence corresponds to an $i$-face of $Q$. This $i$-face is simply $\gamma(F)$. Note here that $\gamma$ induces an isomorphism between the vertex-stars of $w_F$ and $\gamma(w_F)$ in $\mathcal{C}(Q)$; in particular, chambers of $\mathcal{C}(Q)$ containing $w$ are mapped in a one-to-one and type preserving manner to chambers containing $\gamma(w_F)$. While these arguments hold on a purely combinatorial level, it helps to observe that, since $\mathcal{C}(Q)$ is spherical, all combinatorial automorphisms can be realized by PL-homeomorphisms.

At this point we know that $\gamma$ arises from an automorphism $\gamma_Q$ (say) of $Q$. We need to show that $\gamma_Q$ determines $\gamma$ uniquely. 

Suppose $\gamma_Q$ is the identity map on $Q$. Then the automorphism induced by $\gamma$ on $\mathcal{C}(Q)$, $\gamma_{\mathcal{C}(Q)}$ (say), is also the identity map on $\mathcal{C}(Q)$, since the simplices in $\mathcal{C}(Q)$ just correspond to the chains of the boundary complex of $Q$, with vertices of $\mathcal{C}(Q)$ of type $i$ corresponding to faces of rank~$i$. Now with regards to the insertion of complexes like $\mathcal{R}^{L_C}$ into chambers of $\mathcal{C}(Q)$, observe that $\gamma$ maps a complex like $\mathcal{R}^{L_C}$ inserted into a chamber to a similar such complex inserted into the image chamber under $\gamma$. Note here that, since $\gamma$ is realizable by a homeomorphism, $\gamma$ maps ``interiors" of chambers to ``interiors" of chambers, so in particular~$\gamma$ will not swap ``interiors" and ``exteriors" of chambers. Now since $\gamma$ fixes every face of a chamber of $\mathcal{C}(Q)$, which in a complex like $\mathcal{R}^{L_C}$ becomes the outer simplex, then it must fix the entire complex inserted into the chamber. In fact, the outer simplex of a complex $\mathcal{R}^{L_C}$ can be joined to every tile in $\mathcal{R}^{L_C}$ by a sequence of successively adjacent tiles. Beginning with the outer simplex we then can move along this sequence to show that $\gamma$ is also the identity map on the face lattice of the target tile in the sequence; in fact, since consecutive tiles meet in a facet, it is clear that if $\gamma$ is the identity map on the face lattice of a tile in the sequence, then $\gamma$ is also the identity map on the face lattice of the next tile in the sequence. Hence $\gamma$ is the identity map on the entire complex $\mathcal{C}'$ and therefore also on $\mathcal{P}$. 

At this point we know that $\Gamma(\mathcal{P})$ can be viewed as a subgroup of $\Gamma(Q)$ containing the given group $\Gamma$. The final step consists of showing that $\Gamma(\mathcal{P})=\Gamma$.

Recall that $\Gamma$ acts simply transitive on the vertices of $Q$ (even when $\Gamma$ is viewed as a subgroup of $\Gamma(\mathcal{P})$). Hence $\Gamma(\mathcal{P})$ must also act transitively on the vertices of $\mathcal{P}$ which are also vertices of $Q$. We need to show that the stabilizer of a vertex of $Q$ in $\Gamma(\mathcal{P})$ is trivial. Let again $v$ denote the initial vertex of $Q$, and let $\gamma$ belong to the stabilizer of $v$ in $\Gamma(\mathcal{P})$. By our previous arguments, $\gamma$ induces an automorphism of $\mathcal{C}(Q)$ with $\gamma(v)=v$. Hence $\gamma$ induces an automorphism on the vertex-star ${\rm star}_{\mathcal{C}(Q)}(v)$ of $v$ in $\mathcal{C}(Q)$ and must necessarily  also permute the complexes $\mathcal{R}^{L_C}$ inserted into the chambers $C$ of this vertex-star. However, by construction, no two of these complexes $\mathcal{R}^{L_C}$ are isomorphic, so $\gamma$ must necessarily leave every chamber in $\mathcal{C}(Q)$ invariant, in a type preserving manner. Thus $\gamma$ must be the identity, and we are done.
\end{proof}

In summary, we have established the following theorem.

\begin{theorem}
\label{thm}
\label{grpol}
Every finite group is the automorphism group of a finite abstract polytope. In particular, if $\Gamma$ is isomorphic to a subgroup of the symmetric group $S_{n+1}$, then there exists a finite abstract polytope $\mathcal{P}$ of rank $d$, $d\leq n$, with automorphism group $\Gamma$, such that $\mathcal{P}$ is isomorphic to a face-to-face tessellation of the  $(d-1)$-sphere by topological copies of convex $(d-1)$-polytopes.
\end{theorem}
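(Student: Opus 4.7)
The plan is to assemble the theorem from pieces already in place in the excerpt: the cyclic construction of the preamble, the dihedral observation for $d=2$, and the main chamber-replacement construction together with Lemma~\ref{aut} for $d\ge 3$. My proof would essentially be a structured collection of these components, with explicit bookkeeping on rank and on the spherical-tessellation conclusion.

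First I would dispose of the cyclic case $\Gamma=C_k$ by pointing to the polyhedron of Figure~\ref{fig:C3} (and its obvious generalization to arbitrary $k\ge 3$), and by using a line segment or a point for $k\le 2$. In each case the chosen convex polytope manifestly has automorphism group $C_k$, and its boundary complex realizes the required face-to-face tessellation of a topological sphere by convex polytopes of one dimension lower; the rank bound $d\le n$ is immediate, since $C_k\le S_{n+1}$ forces $k\le n+1$, so that a polytope of rank at most $3$ works whenever $k\ge 3$ (and ranks $0$ and $1$ suffice for $k\le 2$).

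Next I would turn to non-cyclic $\Gamma$, realized as a subgroup of $G(T)\cong S_{n+1}$. Lemma~\ref{propQ} produces a convex $d$-polytope $Q\subseteq E$ on which $\Gamma$ acts simply vertex-transitively, and $d\le n$ follows from $\dim E=n$. If $\Gamma=\Gamma(Q)$ already, I would set $\mathcal{P}:=Q$: its boundary complex ${\rm bd}(Q)$ is the desired face-to-face tessellation of $\partial Q\cong S^{d-1}$ by the convex $(d-1)$-polytopes that are its facets, and we are done. This subcase includes $d=2$, where the dihedral argument given in the excerpt forces $\Gamma=\Gamma(Q)$.

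For the remaining case $d\ge 3$ with $\Gamma$ a proper subgroup of $\Gamma(Q)$, I would invoke the full chamber-replacement construction set up in the excerpt: pick pairwise non-isomorphic simplicial convex $d$-polytopes $L_C$ for the chambers $C$ of ${\rm star}_{\mathcal{C}(Q)}(v)$, choose valency parameters $m_0,\ldots,m_{d-1}$ inductively so that (\ref{ab}) holds, insert affine copies of $\mathcal{R}^{L_C}$ equivariantly along $\Gamma$ into every chamber of $\mathcal{C}(Q)$, and adjoin improper faces of ranks $-1$ and $d$ to the refined complex $\mathcal{C}'$ to form the abstract $d$-polytope $\mathcal{P}$. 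By construction $\mathcal{C}'$ tiles $\partial Q\cong S^{d-1}$ face-to-face by topological $(d-1)$-simplices, which are in particular topological copies of convex $(d-1)$-polytopes, yielding the stated tessellation; Lemma~\ref{aut} then delivers $\Gamma(\mathcal{P})=\Gamma$. The technical heart has been absorbed into Lemma~\ref{aut}, so the residual work here is mostly bookkeeping. The only step requiring genuine attention is verifying the topological conclusion uniformly across all subcases (cyclic, dihedral, and generic $d\ge 3$), but in each the underlying complex is manifestly a face-to-face subdivision of a topological sphere by topologically convex pieces, so no new argument is needed.
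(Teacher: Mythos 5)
Your proposal follows essentially the same route as the paper, which itself presents Theorem~\ref{grpol} as a summary of the preceding construction: the cyclic case via the polygon/segment/point examples, the dihedral resolution of $d=2$, Lemma~\ref{propQ} for $Q$, and the chamber-replacement construction with Lemma~\ref{aut} for $d\geq 3$. The only slip is your claim that $\mathcal{C}'$ is tiled by topological $(d-1)$-\emph{simplices} --- the inserted Schlegel diagrams of the pyramids $R_i$ contribute non-simplex tiles --- but since you immediately weaken this to ``topological copies of convex $(d-1)$-polytopes,'' which is all the theorem asserts, the argument stands.
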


We now turn our attention to the realizability of $\po$ as the face lattice of a convex polytope. To do this we must establish the following result, which is of interest in its own right.

\begin{lemma}\label{realizeBSD} Let $R$ be a convex polytope with boundary complex $bd(R)$. Then there exists a convex polytope $R'$ such that $bd(R')$ is combinatorially equivalent to ${\mc C}(R)$, the barycentric subdivision of ${\rm bd}(R)$.
\end{lemma}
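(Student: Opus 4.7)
The plan is to realize $\mathcal{C}(R)$ as the boundary complex of a convex polytope $R'$ by iteratively introducing a new vertex at the centroid of each proper face of positive dimension, processing the faces in decreasing order of dimension.

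Concretely, set $d := \dim R$ and $R_0 := R$. For $k$ running from $d-1$ down to $1$, define $R_{d-k}$ as follows. For each $k$-face $F$ of the \emph{original} polytope $R$, let $c_F$ denote the centroid of $F$, and let $\nu_F$ be a unit vector pointing outward from $R$ near $c_F$ (for $F$ a facet, the outward unit normal; for $k<d-1$, any positive combination of the outward unit normals of the facets of $R$ that contain $F$). Choose a small $\epsilon_k>0$ and put $p_F := c_F + \epsilon_k \nu_F$. Define $R_{d-k}$ as the convex hull of $R_{d-k-1}$ together with the new points $\{p_F : \dim F = k\}$, and finally set $R' := R_{d-1}$.

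I would show by induction on the step number $d-k$ that, when $\epsilon_{d-1}, \ldots, \epsilon_k$ are chosen sufficiently small, the boundary complex of $R_{d-k}$ is combinatorially equivalent to the refinement of $\mathrm{bd}(R)$ obtained by performing, for each $j$ from $d-1$ down to $k$, a stellar subdivision at each $j$-face of $R$ (i.e. replacing that face by the cone from its centroid over its already-subdivided boundary). After all $d-1$ steps, the boundary of $R'$ is exactly $\mathcal{C}(R)$. The base case $d=2$ is direct: $R$ is a convex polygon, and pushing each edge midpoint slightly outward from $R$ produces a convex polygon with twice as many vertices, realizing the barycentric subdivision of the cycle $\mathrm{bd}(R)$. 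At each step, $\epsilon_k$ is chosen after the values $\epsilon_{d-1}, \ldots, \epsilon_{k+1}$ have been fixed; it must be small enough that (i) each added point $p_F$ is a vertex of $R_{d-k}$, and (ii) the convex hull operation produces exactly the intended stellar subdivision locally at $F$, without altering combinatorial structure elsewhere.

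The main obstacle is verifying (ii): that the local effect of adding each $p_F$ is precisely a stellar subdivision at $F$ and nothing more. This reduces to a supporting-hyperplane argument. The facets of $R_{d-k-1}$ that contain $F$ are exactly those whose supporting hyperplanes pass through (or lie arbitrarily close to) $F$; for a sufficiently small outward push, $p_F$ lies strictly on the outer side of precisely these hyperplanes and strictly on the inner side of all the others. Consequently the only facets of $R_{d-k-1}$ that fail to remain facets of $R_{d-k}$ are those containing $F$, and each such facet is replaced by the pyramids from $p_F$ over its sub-facets not containing $F$. This is exactly the combinatorial stellar subdivision at $F$, and iterating this over all proper positive-dimensional faces of $R$ yields $\mathcal{C}(R)$ as the boundary complex of $R'$.
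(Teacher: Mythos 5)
Your overall strategy is the same as the paper's: process the proper faces of $R$ from facets down to edges, and at each round ``pull'' a point just beyond each face of that rank so that the local effect is a stellar subdivision; iterating in decreasing order of rank yields the order complex, i.e.\ $\mathcal{C}(R)$. The paper formalizes your condition (ii) by requiring each new point $b_F^*$ to lie \emph{above} every supporting hyperplane of a facet of the \emph{current} polytope $R_{j+1}$ that contains $F$, \emph{below} every such hyperplane meeting $F$ in a proper face, and (a condition you leave implicit in ``without altering combinatorial structure elsewhere'') so that the segment joining any two new points of the same round meets the interior, which prevents spurious faces spanned by two barycenters of the same rank.

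There is, however, one concrete step that can fail as written: the prescription that $\nu_F$ may be \emph{any} positive combination of the outward unit normals of the facets of the original $R$ containing $F$. For a face $F$ of codimension at least $2$, two of those normals $n_1,n_2$ can satisfy $\langle n_1,n_2\rangle<0$ (e.g.\ the ridge of a sharply peaked ``roof'' prism, where the dihedral angle is small), and then a lopsided positive combination such as $\nu_F=0.01\,n_1+n_2$ has $\langle \nu_F,n_1\rangle<0$. In that case $p_F=c_F+\epsilon_k\nu_F$ lies strictly on the \emph{inner} side of $H_{F_1}$ for every $\epsilon_k>0$ --- shrinking $\epsilon_k$ cannot repair a direction that points to the wrong side --- so $p_F$ is not beyond the facets of $R_{d-k-1}$ lying over $F_1$, and taking the convex hull does not delete them; the result is not the stellar subdivision at $F$. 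The correct requirement is the one you state later in your supporting-hyperplane paragraph (and which the paper imposes directly as its conditions $1^*$ and $2^*$): $p_F$ must have positive inner product with the outward normal of \emph{every} facet of the current polytope $R_{d-k-1}$ containing $F$, not merely lie in the normal cone of $F$ in $R$. Such points exist arbitrarily close to $c_F$ (this is the standard ``pulling''/beneath--beyond fact), but you need to choose $\nu_F$ accordingly rather than allow an arbitrary positive combination; with that correction your argument goes through and coincides with the paper's proof.
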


Once we have proved Lemma~\ref{realizeBSD} we may easily establish:

\begin{theorem}\label{Pconvexly}
The abstract polytope \mc P may be realized convexly.
\end{theorem}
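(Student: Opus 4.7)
The plan is to combine Lemma~\ref{realizeBSD} with an iterative beneath-beyond attachment procedure. First, apply Lemma~\ref{realizeBSD} to $Q$ to obtain a convex $d$-polytope $Q_1$ whose boundary complex is combinatorially equivalent to $\mathcal{C}(Q)$. Under this equivalence, each facet of $Q_1$ is a combinatorial $(d-1)$-simplex corresponding to a unique chamber $C$ of $\mathcal{C}(Q)$, with a canonical type-labelling of its vertices by $0,\ldots,d-1$ inherited from $\mathcal{C}(Q)$.

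The next step is to show that, for each chamber $C$, the refinement complex $\mathcal{R}^{L_C}$ supported on the $(d-1)$-simplex is itself a Schlegel diagram of some convex $d$-polytope $S_C$. I would argue this by iterated application of the beneath-beyond construction: starting with the $d$-crosspolytope, whose Schlegel diagram on a chosen facet is $\mathcal{D}$, one replaces each simplex tile $F_{u_i}$ by gluing in a small affine copy of $R_i$ on the outside of the corresponding facet. For sufficiently small scaling each such gluing preserves convexity, and the resulting boundary is modified precisely by replacing $F_{u_i}$ with the Schlegel diagram of $R_i$. Performing this for $i=0,\ldots,d-1$ and then once more with $L_C$ for the central facet yields a convex polytope $S_C$ whose Schlegel diagram on a distinguished simplex facet $F_C$ is exactly $\mathcal{R}^{L_C}$.

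With the polytopes $S_C$ in hand, I would perform a simultaneous facet-cap attachment. For each facet $\sigma$ of $Q_1$ corresponding to a chamber $C$, choose an affine copy of $S_C$ placed on the outer side of the supporting hyperplane of $\sigma$, with $F_C$ identified with $\sigma$ in the unique way that matches the vertex labels $u_0,\ldots,u_{d-1}$ of $F_C$ to the type-labels $0,\ldots,d-1$ of $\sigma$ (the very same matching used in the construction of $\mathcal{P}$). The union of $Q_1$ with all these caps, equivalently the convex hull of $Q_1$ together with the interior vertices of all caps, is the candidate polytope $Q_2$. Its boundary complex consists of the unrefined $(d-2)$-skeleton of $\mathcal{C}(Q)$ together with the inserted diagrams $\mathcal{R}^{L_C}$ in each facet, which is precisely the complex $\mathcal{C}'$; adjoining the improper faces shows that the face lattice of $Q_2$ realizes $\mathcal{P}$.

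The main obstacle is ensuring that convexity is preserved at every shared $(d-2)$-face of $Q_1$ when all caps are attached simultaneously. Along any such shared face, two caps meet, and the new dihedral angle there is the sum of the dihedral angle of $Q_1$ at that face plus the small angular contributions of the two caps. Since every dihedral angle of $Q_1$ is strictly less than $\pi$ and the angular contribution of a cap tends to zero as its perpendicular height is scaled down, a uniform choice of a sufficiently small scaling parameter across all facets preserves convexity at every shared face at once. A parallel argument handles convexity at the $(d-2)$-faces interior to a single cap, where it is already guaranteed by $S_C$ itself being a convex polytope. Hence $Q_2$ is convex and realizes $\mathcal{P}$.
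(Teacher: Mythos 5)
Your proposal is correct and follows essentially the same route as the paper: apply Lemma~\ref{realizeBSD} to $Q$ to realize $\mathcal{C}(Q)$ convexly, then attach sufficiently flat copies of convex polytopes over the facets to effect the Schlegel-diagram insertions (the paper phrases the flattening via projective transformations rather than affine scaling, but the convexity argument at the ridges is the same). Your extra detail on assembling each cap $S_C$ by iterated beneath--beyond gluing is a faithful elaboration of what the paper leaves as a sketch.
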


\begin{proof} This follows easily from Lemma \ref{realizeBSD}  with $R=Q$. To see this, first observe that by Lemma \ref{realizeBSD} the barycentric subdivision $\mc C(Q)$ of ${\rm bd}(Q)$ may be realized convexly, and all subsequent modifications to the boundary required for the construction of \mc P may be achieved by gluing projective copies of convex polytopes to the facets of this convex realization of $\mc C(Q)$ that are sufficiently thin in the direction of the outward facing normal to the facet. {(See \cite{Zie95} for basic properties of projective transformations and their use in convex polytope theory.)}
\end{proof}

We now present the proof of Lemma \ref{realizeBSD}. {Given a convex polytope $P$, recall that a point $p$ is said to lie {\em below}, or {\em above}, a supporting hyperplane $H$ of $P$ if $p$ lies in the open halfspace bounded by $H$ that contains the interior of $P$, or does not meet $P$, respectively.}

\begin{proof}
We will prove this lemma by presenting an algorithm for constructing the desired polytope $R'$ from the given polytope $R$.

Suppose $R$ is a $k$-polytope with vertex set $V$ and face lattice \mc L$(R)$. Without any loss of generality let its centroid be the origin, $o$. For a non-empty face $F$ of $R$ let $b_F$ denote its centroid. For $j=0,\ldots k-1$ define
\[ {\mc B}_{j} := \{b_{F}\mid F\in  {\mc L}(R), {\rm rank}(F)=j \},\]
and set
\[ {\mc B} := \{b_{F}\mid F\in  {\mc L}(R), F\neq\emptyset,R\} \,=\, \bigcup_{j=0}^{k-1}{\mc B}_{j}. \]
 Then $\mc B$ is the vertex set of the barycentric subdivision ${\mc C}(R)$ of ${\rm bd}(R)$, and $\mc B\cup \{o\}$ is the vertex set of the complete barycentric subdivision of $R$.

We will proceed iteratively, working down from $j=k-1$ to $j=1$. Beginning with the original polytope $R$, we construct, in each step, a new convex $k$-polytope that has more vertices in ``convex position" than the previous polytope. The result is a sequence of convex $k$-polytopes 
\[R=:R_{k},R_{k-1},\ldots,R_{2},R_{1},\] 
in which the last entry, $R_1$, gives the desired polytope $R'$. In the course of the construction we introduce certain families of supporting hyperplanes $\mc H_{F}$ of these polytopes; these are indexed by faces $F$, which in the first step are of rank $k-1$ and in each subsequent step decrease in rank by $1$.

We first describe how to treat the vertices of ${\mc C}(R)$ lying in ${\mc B}_{k-1}$. Let $\mc H_{k-1}$ be the collection of supporting hyperplanes of the  facets (that is, $(k-1)$-faces) of $R$, and let $H_F$ denote the supporting hyperplane of a facet $F$ of $R$.  

 In the first step the families $\mc H_{F}$ to be defined are trivial (and just consist of $H_F$), unlike in subsequent steps. More explicitly, for each facet $F$ of $R$, let 
\[\mc H_{F}:=\{H\in \mc H_{k-1}\mid F\subseteq H\}\] 
and
\[{\mc H}_{F}^{-}:=\{H\in\mc H_{k-1}\mid \mbox{$H\cap F$ is a non-empty proper face of $F$}\}. \]
Clearly, ${\mc H}_{F}=\{H_F\}$. In order to construct the first new polytope, $R_{k-1}$, we may then choose points $b_{F}^{*}$ corresponding to each of the centroids $b_{F}$, with $F$ a facet of~$R$,  such that 
\begin{enumerate}
\item $b_{F}^{*}$ is above each $H\in{\mc H}_{F}$ (that is, above $H_{F}$),  
\item below each $H\in{\mc H}_{F}^{-}$, 
\item and so that the line segment connecting any two such points $b_{F}^{*}$ has a nonempty intersection with the interior  of $R=R_k$ (it may be helpful to think of the point $b_{F}^{*}$ as lying on the ray $\overrightarrow{o\,b_{F}}$).
\end{enumerate}
Then set
\[ \mc B_{k-1}^{*}:= \{b_{F}^{*}\mid F \mbox{ a facet of } R\} \]
and
\[ \hat{\mc B}_{k-1}:=(\mc B\setminus \mc B_{k-1})\cup \mc B_{k-1}^{*}, \]
 and define the polytope $R_{k-1}$ by
\[R_{k-1} := {\rm conv}(\hat{\mc B}_{k-1}) = {\rm conv}(V\cup \mc B^{*}_{k-1}).\]
 By construction, all points in $\hat{\mc B}_{k-1}$ are vertices of $R_{k-1}$, and each facet of $R_{k-1}$ contains exactly one point in $\mc B_{k-1}^{*}$, all thanks to the constraints imposed by (1), (2) and (3)  above. Also note that the $(k-2)$-skeleton of  the original polytope $R$ is a subcomplex of the $(k-2)$-skeleton of the new polytope $R_{k-1}$, so in particular the centroids in $\mc B\setminus\mc B_{k-1}$ are also centroids of faces of $R_{k-1}$.
 
For each successive $j = k-2,\ldots,1$ (performed in decreasing order), we repeat a similar construction step to find a new polytope $R_{j}$ from the already constructed polytope $R_{j+1}$. To this end we let $\mc H_{j}$ be the collection of supporting hyperplanes of { the facets of} $R_{j+1}$, and for each $j$-face $F$ of $R_{j+1}$ also belonging to $R$ we let 
\[\mc H_{F}:=\{H\in \mc H_{j}\mid F\subseteq H\}\] 
and 
\[\mc H_{F}^{-}:=\{H\in \mc H_{j}\mid \mbox{$H\cap F$ is a non-empty proper face of $F$}\}.\] 

From these families of hyperplanes we may similarly find a collection of points $b_{F}^{*}$, with $F$ a $j$-face of $R_{j+1}$ belonging to $R$, satisfying the following slight modifications to the constraints we had before on the  points $b^{*}_{F}$:
\begin{enumerate}[1$^{*}$.]
\item $b_{F}^{*}$ is above each $H\in{\mc H}_{F}$,  
\item below each $H\in{\mc H}_{F}^{-}$, 
\item and so that the line segment connecting any two such points $b_{F}^{*}$ has a nonempty intersection with the interior of $R_{j+1}$.
\end{enumerate}
From these choices for the points $b_{F}^{*}$ we obtain new sets of vertices $\mc B_{j}^{*}$ and $\hat{\mc B}_{j}$ and a convex polytope $R_{j}$, with the only vertices in $R_{j}$ not already in $R_{j+1}$ being the points in ${\mc B}_{j}^*$. Also note that at each step, all the faces of $R_{j+1}$ of rank lower than $j$ are preserved for $R_j$, and all of the newly introduced faces involve exactly one of the points in $\mc B_{j}^{*}$.

At the final stage of the process we arrive at a polytope $R_1$. Now let $R':=R_{1}$. Note that the only faces of $R$ present in the boundary complex of $R'$ are the vertices of $R$, for in the construction of $R_{j}$ we removed the faces of the boundary complex of $R_{j+1}$ of rank $j$ (and thus of $R$) by placing a point $ b^{*}_{F}$ above the hyperplanes defining that face. Moreover, the facets of $R'$ are $(k-1)$-simplices whose vertices are given by the points $b_F^*$ associated with the faces $F$ in a flag of $R$; in fact, a new 
point $b_F^*$, arising from a $j$-face $F$ of $R$ (and hence of $R_{j+1}$), is introduced at each step of the construction process, and if an $i$-face $G$ of $R$ is such that $G\leq F$ then $b_{G}^*$ and $b_{F}^*$ lie in a common facet of $R'$. Finally, note that every facet of $R'$ contains exactly one vertex of $R$ since the final step of the construction introduces vertices that eliminate edges connecting vertices of $R$; in particular, the vertex at the other end of such a new edge of $R'$ is below every facet defining hyperplane of $R$ that does not contain the original edge.
\end{proof}

\section{Concluding Remarks and Open Problems}

In the context of the current work, a number of interesting problems suggest themselves. While the construction used to prove Theorem~\ref{thm} would generally be expected to result in spherical (and convex) polytopes of arbitrarily large ranks, high ranks are not generally required if non-spherical abstract polytopes are permitted as the underlying combinatorial structures on which a given finite group $\Gamma$ acts as the full automorphism group. In fact, it was established in~\cite{sisk} that every finite group $\Gamma$ occurs as the automorphism group of an abstract polytope of rank $3$ (see also \cite{coma}). The construction in \cite{sisk} is described in terms of maps on surfaces but also proves the result for $3$-polytopes; the key idea is to start from a surface embedding of a Cayley graph of $\Gamma$ and then replace small neighborhoods of the edges by suitable planar graphs that force the resulting map to have automorphism group $\Gamma$. Thus the minimum possible rank, $3$, is achieved if arbitrary abstract polytopes are permitted to represent $\Gamma$. Note that only dihedral groups occur as automorphism groups of abstract polytopes of rank $2$.

The result in \cite{sisk} was inspired by a classical result obtained in~\cite{fru} stating that every finite group $\Gamma$ is the automorphism group of a finite graph. 

Our insistence on sphericity severely limits the choice of groups $\Gamma$ as automorphism groups of abstract 3-polytopes (or maps on surfaces). For a discussion of the groups that can act on the 2-sphere see~\cite[Section 6.3.2]{grtu}. 

Call an abstract $4$-polytope {\em locally toroidal\/} if each facet or vertex-figure that is not a spherical 3-polytope, is 
a toroidal 3-polytope. The following problem seems to be open.
\begin{open}
Is every finite group the automorphism group of a finite locally toroidal abstract $4$-polytope?
\end{open}

One may also wish to modify the construction of Theorem~\ref{thm} to require that the polytopes produced have certain geometric or combinatorial features.
\begin{open}Can one modify the construction of Theorem~\ref{thm} so as to have $\mc P$ simplicial or $P$ cubical (all facets are isomorphic to cubes)?
\end{open}

It is also natural to ask if further restrictions on the classes of finite groups might have an impact upon structural features of the polytopes whose automorphism groups fall into the specified categories. For example, what can one say about finite abelian groups? 

One may also wish to consider the case of infinite groups. By their nature, abstract polytopes are necessarily countable, and so then their automorphism groups must also be countable. The countability of abstract polytopes is a consequence of the flag-connectedness. In fact, every flag can be reached from a given flag $\Phi$ by a finite sequence of flags in which successive flags are adjacent, and the number of flags that can be reached from $\Phi$ by a sequence of a given length $m$ is finite; as $m$ ranges over all non-negative integers, the set of flags is countable, and so is the set of faces. 

Tilings of $\EE^{n}$ are non-finite abstract polytopes and their symmetry groups have been the subject of intensive investigation (e.g., \cite{ConBurGoo08,GruShe89}). Less is known about their combinatorial automorphism groups. Call a tiling of $\EE^{n}$ by topological $n$-polytopes (homeomorphic copies of convex $n$-polytopes) {\em locally finite} if every point in space has a neighborhood meeting only finitely many tiles; call it {\em face-to-face\/} if any two tiles meet in a common face, possibly the empty set.

\begin{open} Which countable infinite groups are automorphism groups of locally finite face-to-face tilings of a finite-dimensional real space by topological polytopes (homeomorphic copies of convex polytopes)? How about finitely generated infinite groups?
\end{open}

It would also be interesting to explore to what extent epimorphisms between groups correspond to coverings between certain types of polytopes. The results from \cite{MPW14,MonSch14} may be helpful in studying this question.

\begin{acknowledgements}
The authors would like to acknowledge Branko Gr\"unbaum for providing useful comments during the preparation of this paper; Tom Tucker for some helpful discussion about group actions on maps; and finally the referees for making valuable suggestions that have improved the paper. The first author was supported by NSA-grant H98230-14-1-0124.
\end{acknowledgements}

\bibliographystyle{spmpsci}      

\begin{thebibliography}{}
\bibitem[Bay88]{Bay88}
M.~M. Bayer.
\newblock Barycentric subdivisions.
\newblock {\em Pacific J. Math.}, 135(1):1--16, 1988.

\bibitem[CM82]{coma}
R.~Cori and A.~Machi.
\newblock Construction of maps with prescribed automorphism groups.
\newblock {\em Theoret. Comp. Sci.} 21:91-98, 1982.


\bibitem[CBG08]{ConBurGoo08}
J.~H. Conway, H.~Burgiel, and C.~Goodman-Strauss.
\newblock {\em The Symmetries of Things}.
\newblock A K Peters Ltd., Wellesley, MA, 2008.

\bibitem[Co40]{Cox40}
H.~S.~M.~Coxeter.
\newblock{Regular and semi-regular polytopes, I}.
\newblock{\em Math.\ Z.} 46:380--407, 1940.
\newblock{(In {\em Kaleidoscopes:  Selected
Writings of H.~S.~M.~Coxeter\/} (eds.\ F.~A.~Sherk, P.~McMullen, A.~C.~Thompson and 
A.~I.~Weiss), Wiley-Interscience (New York, etc., 1995), 251--278.)}

\bibitem[Co73]{Cox73}
H.~S.~M.~Coxeter.
\newblock {\em Regular Polytopes}.
\newblock Dover Publications, 1973.

\bibitem[Co85]{Cox85}
H.~S.~M.~Coxeter.
\newblock{Regular and semi-regular polytopes, II}.
\newblock{\em Math.\ Z.} 188:559--591, 1985.
\newblock{(In {\em Kaleidoscopes:  Selected
Writings of H.~S.~M.~Coxeter\/} (eds.\ F.~A.~Sherk, P.~McMullen, A.~C.~Thompson and 
A.~I.~Weiss), Wiley-Interscience (New York, etc., 1995), 279--311.)}

\bibitem[Dr81]{Dre81}
A.~Dress.
\newblock A combinatorial theory of {G}r{\"u}nbaum's new regular polyhedra,
  part {I}: Gr\"unbaum's new regular polyhedra and their automorphism group.
\newblock {\em Aequationes Mathematicae}, 23:252--265, 1981.

\bibitem[Dr85]{Dre85}
A.~Dress.
\newblock A combinatorial theory of {G}r{\"u}nbaum's new regular polyhedra,
  part {II}: Complete enumeration.
\newblock {\em Aequationes Mathematicae}, 29:222--243, 1985.

\bibitem[Fr38]{fru}
R.~Frucht.
\newblock Herstellung von Graphen mit vorgegebener abstrakter Gruppe.
\newblock {\em Compositio Math.} 6:239--250, 1938.


\bibitem[GoOR04]{GooORo04}
J.~E. Goodman and J.~O'Rourke, editors.
\newblock {\em Handbook of Discrete and Computational Geometry}.
\newblock Discrete Mathematics and its Applications (Boca Raton). Chapman \&
  Hall/CRC, Boca Raton, FL, second edition, 2004.
 
\bibitem[GT87]{grtu}
J.~L.~Gross and T.~W.~Tucker. 
\newblock {\em Topological Graph Theory}.
\newblock Dover, Second Edition, 2001.
\newblock (First Edition, Wiley-Interscience, 1987.)


\bibitem[Gr03]{Gru03}
B.~Gr{\"u}nbaum.
\newblock {\em Convex Polytopes}, Volume 221 of {\em Graduate Texts in
  Mathematics}.
\newblock Springer-Verlag, New York, second edition, 2003.
\newblock Prepared and with a preface by Volker Kaibel, Victor Klee and
  G{\"u}nter M. Ziegler.

\bibitem[Gr77]{Gru77}
B.~Gr{\"u}nbaum.
\newblock Regular polyhedra---old and new.
\newblock {\em Aequationes Math.}, 16(1-2):1--20, 1977.

\bibitem[GS89]{GruShe89}
B.~Gr{{\"u}}nbaum and G.~C. Shephard.
\newblock {\em Tilings and patterns}.
\newblock A Series of Books in the Mathematical Sciences. W. H. Freeman and
  Company, New York, 1989.
\newblock An introduction.

\bibitem[MS02]{McMSch02}
P.~McMullen and E.~Schulte.
\newblock {\em Abstract Regular Polytopes}.
\newblock Cambridge University Press, 2002.

\bibitem[MS14]{MonSch14}
B.~Monson and Egon Schulte.
\newblock Finite polytopes have finite regular covers.
\newblock {\em J. Algebraic Combin.}, 40(1):75--82, 2014.

\bibitem[MPW14]{MPW14}
B.~Monson, D.~Pellicer, and G.~I.~Williams. 
\newblock Mixing and monodromy of abstract polytopes. 
\newblock Trans. Amer. Math. Soc. 366 (2014), 2651-2681.

\bibitem[Pe12]{Pel12}
Daniel Pellicer.
\newblock Developments and open problems on chiral polytopes.
\newblock {\em Ars Math. Contemp.}, 5(2):333--354, 2012.

\bibitem[SW91]{SW1}
E.~Schulte and A.I.~Weiss, 
\newblock Chiral Polytopes. 
\newblock In {\em Applied Geometry and Discrete Mathematics (The Klee Festschrift)}, ed.~P.~Gritzmann and B.~Sturmfels, DIMACS Series in Discrete Mathematics and Theoretical Computer Science, Vol.~4,  Amer.~Math.~Soc.~and Assoc.~Computing Machinery (1991), 493--516.


\bibitem[SS93]{sisk}
J.~\v{S}ir\'a\v{n} and M.~\v{S}koviera. 
\newblock Orientable and non-orientable maps with given automorphism groups.
\newblock {\em Australasian Journal of Combinatorics} 7:47--53, 1993.


\bibitem[Zi95]{Zie95}
G{{\"u}}nter~M. Ziegler.
\newblock {\em Lectures on polytopes}, Volume 152 of {\em Graduate Texts in
Mathematics}.
\newblock Springer-Verlag, New York, 1995.

\end{thebibliography}


\end{document}